\newtheorem{definition}{Definition}[section]
\newtheorem{theorem}[definition]{Theorem}
\newtheorem{lemma}[definition]{Lemma}
\newtheorem{prop}[definition]{Proposition}
\newtheorem{remark}[definition]{Remark}
\begin{document}

\title{A Polynomial Approximation Result for Free Herglotz-Agler Functions}
\author{Kenta Kojin}
\address{
Graduate School of Mathematics, Nagoya University, 
Furocho, Chikusaku, Nagoya, 464-8602, Japan
}
\email{m20016y@math.nagoya-u.ac.jp}
\date{\today}
\keywords{Schwarz lemma, nc Schur-Agler calss, free Herglotz-Agler class}
\thanks{The author would like to take this opportunity to thank the “Nagoya University
Interdisciplinary Frontier Fellowship” supported by JST and Nagoya University.
} 
%thanks の {} 内にはもらっている研究費がある時に記入する．
%日付はあまり重要ではないが，version がわからなくならないように書いておくと便利；\today でコンパイルした日が現れる．自分で日にちを書いても良い．

%%%%%%%%%
\begin{abstract}
In this paper, we prove a noncommutative (nc for short) analog of Schwarz lemma for the nc Schur-Agler class and prove that the regular nc Schur-Agler class and the regular free Herglotz-Agler class are homeomorphic. Moreover, we give a characterization of regular free Herglotz-Agler functions. As an application, we will show that any regular free Herglotz-Agler functions can uniformly be approximated by regular Herglotz-Agler free polynomials.
\end{abstract}

 \maketitle

%%%%%%%%%%%%%%%%intro

\section{Introduction}

In the context of complex analysis, a holomorphic function from the open unit disk into the right half plane is called a Herglotz function and has been studied in detail. A Herglotz function is said to be regular if it maps $0$ to $1$. About a century ago, Herglotz \cite{Her} showed that a regular Herglotz function admits an integral representation with a probability measure on the unit circle $\mathbb{T}$.

%%%%%%%%%%%%%%%%%%%%Herglotz

\begin{theorem}$($Herglotz \cite{Her}$)$
A holomorphic function $h$ defined on the open unit disk $\mathbb{D}$ is a regular Herglotz function, that is $\Re h\ge0$ and $h(0)=1$ if and only if there exists a unique probability measure $\mu$ supported on $\mathbb{T}$ such that
\begin{equation*}
h(x)=\int_{\mathbb{T}} \frac{1+e^{i\theta}x}{1-e^{i\theta}x} \, d\mu(e^{i\theta}).
\end{equation*}
\end{theorem}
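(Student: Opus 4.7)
The plan is to prove the two directions separately, with uniqueness handled by a Fourier-coefficient argument.

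For the ``if'' direction, I would take a probability measure $\mu$ on $\mathbb{T}$ and verify the three defining properties of $h$. Holomorphy follows by differentiating under the integral; since the integrand is holomorphic in $x$ and uniformly bounded on compact subsets of $\mathbb{D}$, the exchange of limits is routine. Setting $x=0$ gives $h(0)=\int 1\,d\mu=1$. Finally, the identity
\[
\Re\frac{1+e^{i\theta}x}{1-e^{i\theta}x}=\frac{1-|x|^2}{|1-e^{i\theta}x|^2}\ge 0
\]
exhibits the real part of the kernel as the Poisson kernel, so integrating against the positive measure $\mu$ gives $\Re h\ge 0$.

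For the ``only if'' direction, I would approximate $h$ from the interior. For each $r\in(0,1)$, define the measure
\[
d\mu_r(e^{i\theta})=\frac{1}{2\pi}\,\Re h(re^{i\theta})\,d\theta,
\]
which is a probability measure: nonnegativity comes from $\Re h\ge 0$, and total mass one comes from the mean-value property applied to the harmonic function $\Re h$ together with $\Re h(0)=1$. The function $z\mapsto h(rz)$ is holomorphic on a neighborhood of $\overline{\mathbb{D}}$, so the Schwarz integral formula together with $h(0)=1\in\mathbb{R}$ (which kills the imaginary constant) yields, after a standard reparametrization of $\mathbb{T}$,
\[
h(rz)=\int_{\mathbb{T}}\frac{1+e^{i\theta}z}{1-e^{i\theta}z}\,d\mu_r(e^{i\theta}),\qquad z\in\mathbb{D}.
\]
By Banach--Alaoglu there is a weak-$\ast$ subsequential limit $\mu$ of $\mu_r$ as $r\to 1^-$, again a probability measure on $\mathbb{T}$. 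For each fixed $z\in\mathbb{D}$ the Herglotz kernel $e^{i\theta}\mapsto(1+e^{i\theta}z)/(1-e^{i\theta}z)$ is continuous on $\mathbb{T}$, so weak-$\ast$ convergence lets me pass to the limit on the right while $h(rz)\to h(z)$ on the left, producing the representation.

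Uniqueness comes from the power-series expansion
\[
\frac{1+e^{i\theta}x}{1-e^{i\theta}x}=1+2\sum_{n\ge 1}e^{in\theta}x^n,
\]
valid for $|x|<1$ and $|e^{i\theta}|=1$: if $\mu,\nu$ are two representing probability measures, matching Taylor coefficients at $x=0$ forces $\int e^{in\theta}\,d(\mu-\nu)=0$ for every $n\ge 1$, the $n=0$ moment matches because both are probability measures, and complex conjugation handles negative $n$. Hence all Fourier coefficients of $\mu-\nu$ vanish, so $\mu=\nu$. I expect the main technical obstacle to be the passage to the weak-$\ast$ limit: Banach--Alaoglu only supplies subsequential limits a priori, so one must argue that every cluster point represents the \emph{same} $h$ and then invoke the uniqueness just proved to upgrade subsequential convergence to convergence of the full family $\{\mu_r\}_{r\in(0,1)}$.
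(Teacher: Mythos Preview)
The paper does not actually prove this theorem: it is stated in the introduction as a classical result due to Herglotz and cited to \cite{Her}, with no proof supplied. So there is nothing to compare against on the paper's side.

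That said, your argument is the standard textbook proof and is correct. The ``if'' direction is routine; in the ``only if'' direction the Schwarz integral formula together with weak-$\ast$ compactness of probability measures on $\mathbb{T}$ is exactly the classical route, and your observation that $h(0)\in\mathbb{R}$ kills the additive imaginary constant is the right detail to flag. Your uniqueness argument via Fourier coefficients is also the usual one. The only cosmetic point is that the Schwarz formula naturally produces the kernel $\dfrac{e^{it}+z}{e^{it}-z}$, which you then rewrite as $\dfrac{1+e^{i\theta}z}{1-e^{i\theta}z}$ via $\theta=-t$; you already note the reparametrization, so this is fine. Your closing remark that uniqueness upgrades subsequential convergence to convergence of the full family is correct but not even needed for the statement as written---you only need \emph{some} representing measure, and uniqueness is proved separately.
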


%%%%%%%%%%%%%%%%%%%%%%

Agler \cite{Agl} developed this representation theory for an appropriate class in the setting of several variables based on operator theory. Recently, Pascoe, Passer and Tully-Doyle \cite{PPT} proved an nc analog of Herglotz representation theorem.
Motivated by these works, we will give a polynomial approximation type characterization of regular free Herglotz-Agler functions. The consequence is that any regular free Herglotz-Agler functions on the polynomial polyhedron $B_{\delta}$ associated with a matrix $\delta$ of free polynomials that satisfies $\delta(0)=0$ can uniformly be approximated by regular Herglotz-Agler free polynomials on each $K_{\delta,r}=\{x\in B_{\delta}\;|\;\|\delta(x)\|\le r\}$ with $0<r<1$.  We have known a polynomial approximation result for nc Schur-Agler functions \cite[Theorem 3.3]{Koj}, which assert that every regular nc Schur-Agler function can uniformly be approximated by regular nc Schur-Agler free polynomials on each $K_{\delta, r}$. Hence it is natural to translate the approximation sequence $\{p_n\}_{n=1}^{\infty}$ into the regular free Herglotz-Agler class via the Cayley transform $f\mapsto\displaystyle\frac{1+f}{1-f}$. However, we encounter a topological problem, namely, we must estimate $\displaystyle\frac{1}{\|1-p_n(x)\|}$ uniformly on each $K_{\delta, r}$. We will overcome this problem by proving an nc analog of Schwarz lemma, which is one of the main observations of this paper.

The celebrated Schwarz lemma in the classical complex analysis asserts that a bounded holomorphic function $f$ on $\mathbb{D}$ with norm condition
 $\displaystyle\sup_{x\in\mathbb{D}}|f(x)|\le 1$ and $f(0)=0$ must satisfy $|f(x)|\le |x|$ for all 
$x\in\mathbb{D}$ and $|f'(0)|\le 1$ (see e.g., \cite[Theorem 12.2]{Rud}). We will generalize this to the setting of several noncommutative variables appropriately in section 3. This may be regarded as a generalization of a part of Popescu's work \cite[Corollary 2.5]{Pop2006}. In fact, what we will actually prove is that
\begin{equation*}
\|f(x)\|\le\|\delta(x)\|
\end{equation*}
holds for every regular nc Schur-Agler function $f$. We will crucially use this fact later. For example, it enables us  to prove an nc analog of maximum principle in section 3. Its proof was inspired by Popescu's observation in \cite[Theorem 2.5]{Pop2010}. We will also use it to show that the regular nc Schur-Agler class and the regular free Herglotz-Agler class are homeomorphic to each other via the Cayley transforms. Hence we can translate the previous polynomial approximation result for nc Schur-Agler functions \cite[Theorem 3.3]{Koj} into the regular free Herglotz-Agler class. In this way, we will establish the main result of this paper.

In closing of the introduction, we emphasize that our Schwarz lemma type result is an nc analog of that for Schur-Agler functions rather than Schur functions.

%%%%%%%%%%%%%%%%%%%%%%%%%Section 2 ncfunct

\section{Preliminaries}

We review some materials on nc functions. Let $\mathbb{M}_n^d$ denote the set of $d$-tuples of $n\times n$ matrices and let $\mathbb{M}^d$ denote the {\bf $d$-dimensional noncommutative universe}, which is given by 

\begin{equation*}
\mathbb{M}^d=\displaystyle\coprod_{n=1}^{\infty}\mathbb{M}_n^d.
\end{equation*}
This is the domain of free polynomials. There are a couple of natural operations on $\mathbb{M}^d$. If $x\in\mathbb{M}_m^d$ and $y\in\mathbb{M}_n^d$, then
\begin{equation*}
x\oplus y:=\left(\left[
\begin{matrix}
x^1&0\\
0&y^1
\end{matrix}
\right],\ldots,\left[
\begin{matrix}
x^d&0\\
0&y^d
\end{matrix}
\right]\right)\in\mathbb{M}_{m+n}^d.
\end{equation*}
If $x\in\mathbb{M}_n^d$, $\alpha$ is a $k\times n$ matrix and $\beta$ is an $n\times m$ matrix, then 
\begin{equation*}
\alpha x\beta:=(\alpha x^1\beta,\ldots, \alpha x^d\beta).
\end{equation*}
 We define the $C^*$ norm on each set $\mathbb{M}_n^d$ by the formula
\begin{equation*}
\|x\|_n:=\displaystyle\max_{1\le r\le d}\|x^r\|_{B(\mathbb{C}^n)}.
\end{equation*}
We say that a set $\Omega\subset\mathbb{M}^d$ is an {\bf nc set} if $\Omega$ is closed under the direct sums  i.e., if $x\in\Omega_n:=\Omega\cap\mathbb{M}_n^d$ and $y\in\Omega_m$, then $x\oplus y\in \Omega_{n+m}$. An nc set $\Omega\subset\mathbb{M}^d$ is an {\bf nc domain} if $\Omega$ is {\bf disjoint union open}, which means that $\Omega\cap\mathbb{M}_n^d$ is Euclidean open for all $n\ge 1$.

%%%%%%%%%%%%%%%%

A function $f:\Omega\rightarrow\mathbb{M}$ is an {\bf nc function} if
\begin{enumerate}
\item $f$ is {\bf graded}, i.e., if $x\in\Omega_n$, then $f(x)\in\mathbb{M}_n$, and
\item $f$ {\bf respects intertwinings}, i.e., whenever $x\in\Omega_n$, $y\in\Omega_m$ and an $m\times n$ matrix $\alpha$ satisfy $\alpha x=y\alpha$, then $\alpha f(x)=f(y)\alpha$.
\end{enumerate}
Note that a function $f$ on an nc subset is nc if and only if $f$ satisfies the following conditions (see \cite[Section I.2.3]{KVV}):
\begin{enumerate}
\item $f$ is graded,
\item $f$ {\bf respects direct sums}, i.e., if $x$ and $y$ are in $\Omega$, then $f(x\oplus y)=f(x)\oplus f(y)$, and
\item $f$ {\bf respects similarities}, i.e., whenever $x, y\in\Omega_n$,
 $\alpha\in\mathbb{M}_n$ with $\alpha$ invertible such that
  $y=\alpha x\alpha^{-1}$, then $f(y)=\alpha f(x)\alpha^{-1}$.
\end{enumerate}

%%%%%%%%%%%%%%%%%%%%%%%%%%%%%%%%5

Next, we will define the free topology, the nc Schur-Agler class and the free Herglotz-Agler class. Let $\delta$ be an $s\times r$ matrix of free polynomials in $d$-variables. Let
\begin{equation*}
B_{\delta}=\{x\in\mathbb{M}^d\;|\;\|\delta(x)\|<1\}.
\end{equation*}
Here $\|\delta(x)\|$ denotes the operator norm. Then, $B_{\delta}$ becomes an nc domain. A set of the above form is called a {\bf polynomial polyhedron}. The free topology is the topology on $\mathbb{M}^d$ generated by all polynomial polyhedra. The {\bf nc Schur-Agler class on $B_{\delta}$}, $\mathcal{SA}(B_{\delta})$, is defined by
\begin{equation*}
\mathcal{SA}(B_{\delta}):=\left\{f:B_{\delta}\rightarrow\mathbb{M}\;\middle|\;\mbox{$f$ is nc and} \displaystyle\sup_{x\in B_{\delta}}\|f(x)\|\le 1\;\right\}.
\end{equation*}
In addition, we assume $\delta(0)=0$. A function in the nc Schur-Agler class on $B_{\delta}$ is regular if $f(0)=0$. We denote by $\mathcal{RSA}(B_{\delta})$ the set of functions in the nc Schur-Agler class that is regular. We call this class the {\bf regular nc Schur-Agler class on $B_{\delta}$}.

Agler and McCarthy \cite{AMg} showed that each function in the nc Schur-Agler class admits a realization formula. Ball, Marx and Vinnikov \cite{BMV} studied the nc Schur-Agler class in a more general setting. 

%%%%%%%%%%%%%%%%%%%%realization

\begin{theorem}$($\cite[Corollary 8.13]{AMg}$)$
Let $B_{\delta}$ be a polynomial  polyhedron, and let $f$ be a graded function from $B_{\delta}$ into $\mathbb{M}$. Then, the following conditions are equivalent:
\begin{enumerate}
\item $f\in\mathcal{SA}(B_{\delta})$.
\item There exist an auxiliary Hilbert space $\mathcal{X}$ and a unitary operator
\begin{equation*}
U=
\begin{bmatrix}
A&B\\
C&D
\end{bmatrix}:
\begin{bmatrix}
\mathcal{X}\otimes\mathbb{C}^s\\
\mathbb{C}
\end{bmatrix}\rightarrow
\begin{bmatrix}
\mathcal{X}\otimes\mathbb{C}^r\\
\mathbb{C}
\end{bmatrix}
\end{equation*}
such that for all $x\in(B_{\delta})_n$, 
\begin{equation*}
f(x)=
\begin{matrix}
D\\
\otimes\\
I_n
\end{matrix}
+
\begin{matrix}
C\\
\otimes\\
I_n
\end{matrix}\left(
\begin{matrix}
I_{\mathcal{X}}\\
\otimes\\
I_{n\times s}
\end{matrix}
-
\begin{matrix}
I_{\mathcal{X}}\\
\otimes\\
\delta(x)
\end{matrix}
\begin{matrix}
A\\
\otimes\\
I_n
\end{matrix}
\right)^{-1}
\begin{matrix}
I_{\mathcal{X}}\\
\otimes\\
\delta(x)
\end{matrix}
\begin{matrix}
B\\
\otimes\\
I_n
\end{matrix}\;\;.
\end{equation*}
\end{enumerate}
\end{theorem}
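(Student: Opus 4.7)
The plan is to prove the two implications separately, following the standard Agler--McCarthy realization strategy.

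For $(2) \Rightarrow (1)$, the argument is essentially algebraic. First, $f$ is nc because the right-hand side of the realization formula is built by tensoring fixed matrices in $U$ with the nc function $\delta(x)$ and with identities, so it respects direct sums and similarities automatically. Second, unitarity of $U$ yields a factorization of the form
$$I_n - f(x)^* f(x) = M(x)^* \bigl(I - \delta(x)^* \delta(x)\bigr) M(x),$$
where $M(x)$ is obtained from $(I_{\mathcal{X} \otimes \mathbb{C}^s} - (I_{\mathcal{X}} \otimes \delta(x))(A \otimes I_n))^{-1}(B \otimes I_n)$; this identity falls out by substituting the formula for $f$ and using $U^* U = I$ to cancel cross terms. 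Since $\|\delta(x)\| < 1$ on $B_\delta$, the right-hand side is positive, so $\|f(x)\| \le 1$.

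For $(1) \Rightarrow (2)$, the strategy is first to establish an Agler-type decomposition and then to apply a lurking isometry argument. The first step is to show that for $f \in \mathcal{SA}(B_\delta)$ there exists an operator-valued nc kernel $K(y,x)$ with
$$I - f(y)^* f(x) = K(y,x)\bigl(I - \delta(y)^* \delta(x)\bigr) K(y,x)^*$$
in the matricial sense on pairs from $B_\delta$. This is obtained via a Hahn--Banach separation argument: consider the convex cone $\mathcal{C}$ of nc kernels generated by expressions $G(y,x)(I - \delta(y)^* \delta(x)) G(y,x)^*$ and show that $I - f(y)^* f(x)$ lies in its closure. Otherwise, a separating functional yields, by an nc GNS construction, a point $x \in B_\delta$ together with vectors at which $\|f(x)\| > 1$, contradicting the hypothesis.

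The second step is to rearrange the decomposition as
$$I + K(y,x)\,\delta(y)^*\delta(x)\,K(y,x)^* = f(y)^* f(x) + K(y,x) K(y,x)^*,$$
which defines a densely defined isometry $V$ between two subspaces of $\mathcal{X} \otimes \mathbb{C}^s \oplus \mathbb{C}$ and $\mathcal{X} \otimes \mathbb{C}^r \oplus \mathbb{C}$, where $\mathcal{X}$ is the reproducing kernel Hilbert space associated with $K$. Extending $V$ to a unitary $U$ (possibly enlarging $\mathcal{X}$) and reading off its $2 \times 2$ block form produces exactly the operators $A, B, C, D$ of the theorem; tracking what $U$ does on reproducing vectors recovers the stated formula for $f$. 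The main obstacle is the Hahn--Banach step: one needs a separation theorem that respects the graded, matrix-level positivity structure of nc kernels, and the accompanying nc GNS construction must faithfully represent the evaluation of $f$ on matrix arguments so that the contradiction with $\|f\| \le 1$ really lands inside $B_\delta$. Once the Agler decomposition is in hand, the lurking isometry part is routine.
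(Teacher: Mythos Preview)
The paper does not prove this theorem: it is quoted verbatim as \cite[Corollary 8.13]{AMg} and used as a black box in the sequel, so there is no ``paper's own proof'' to compare your attempt against. Your sketch does follow the standard Agler--McCarthy strategy (cone separation to obtain an nc Agler decomposition, then a lurking-isometry argument), which is indeed how the cited source proceeds; but be aware that the separation step in the nc setting is the substantial part and your description of it (``otherwise a separating functional yields, by an nc GNS construction, a point $x\in B_\delta$\ldots'') glosses over the genuine work of building a finite-dimensional representation from a functional on nc kernels and showing the resulting tuple lands in $B_\delta$.
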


In this paper, we only treat the regular nc Schur-Agler class. We will crucially use a realization formula to prove an nc analog of Schwarz lemma and the maximum principle in the next section.

%%%%%%%%%%%%%%%%%%%%%%%%def of herglotz

Finally, we define the regular free Herglotz-Agler class.  Let $\delta$ be a matrix of free polynomials in $d$-variables with $\delta(0)=0$. The {\bf regular free Herglotz-Agler class on $B_{\delta}$}, $\mathrm{RHA}(B_{\delta})$, is defined by

\begin{equation*}
\mathrm{RHA}(B_{\delta}):=\left\{h:B_{\delta}\rightarrow\mathbb{M}\middle|\; h \;\mbox{is nc}, \Re{h(x)}=\frac{h(x)+h(x)^*}{2}\ge 0\; \mbox{and}\; h(0)=I\right\}.
\end{equation*}

We endow $\mathcal{RSA}(B_{\delta})$ and $\mathrm{RHA}(B_{\delta})$ with the topology of uniform convergence on closed polynomial polyhedra. Namely, a net of functions
 $\{f_{\lambda}\}_{\Lambda}$ converges to $f$ if and only if for every 
 $K_{\delta,r}=\{x\in B_{\delta}\;|\;\|\delta(x)\|\le r\}$, $\{f_{\lambda}\}_{\Lambda}$ uniformly norm-converges to $f$ on $K_{\delta,r}$. The topology is first countable. We will give an explicit relation between the regular nc Schur-Agler class and the free Herglotz-Agler class in the next section.

%%%%%%%%%%%%%%%%%%%%%%%%%%%%section 3

\section{Nc Schwarz lemma and regular nc Schur-Agler class v.s. regular free Herglotz-Agler class}

First, we recall the {\bf right difference-differential operator} $\Delta$ and the holomorphy of nc functions. Let $\Omega\subset\mathbb{M}^d$ be an nc domain. Since $\Omega$ is right admissible i.e., if $x\in\Omega_n$, $y\in\Omega_m$ and $z$ is a $d$-tuples of $n\times m$ matrices, then there exists a non-zero complex number $t$ such that 
$\begin{bmatrix}
x& tz\\
0& y
\end{bmatrix}\in \Omega_{n+m}$. Then, for any nc functions $f$, there exists an $n\times m$ matrix $w$ so that
\begin{equation*}
f\left(
\begin{bmatrix}
x& tz\\
0& y
\end{bmatrix}\right)=
\begin{bmatrix}
f(x)&w\\
0& f(y)
\end{bmatrix},
\end{equation*}
and we define $\Delta f(x,y)(z):=t^{-1}w$. See \cite[Proposition 2.2]{KVV}. Then $\Delta f(x,y)$ gives a linear map from the space of all $d$-tuples of $n\times m$ matrices to the space of all $n\times m$ matrices (\cite[Proposition 2.4 and Proposition 2.6]{KVV}). For nc functions, local boundedness and holomorphy are equivalent (\cite[Theorem 12.17]{AMY}, \cite[Corollary 7.6]{KVV}). 
In particular, an nc Schur-Agler function $f\in\mathcal{SA}(B_{\delta})$ is 
Fr\'{e}chet 
differentiable on each level $(B_{\delta})_n$ and its Fr\'{e}chet derivative at $x\in(B_{\delta})_n$ is given by the linear operator 
$\Delta f(x,x):\mathbb{M}_n^d\rightarrow\mathbb{M}_n$ (\cite[Theorem 7.2]{KVV}). The left difference-differential operator $\Delta_L$ is also available. By \cite[Proposition 2.8]{KVV}, $\Delta_R f(x,y)=\Delta_L f(y,x)$ on an nc set. Thus, it suffices to discuss only the right one.

In the rest of this paper, we will assume $\delta(0)=0$. In this paper, we will crucially use the next proposition, which should be understood as an nc analog of famous Schwarz lemma. Popescu \cite[Corollary 2.5]{Pop2006}  (essentially) showed the inequality in the next proposition when $\delta(x)=[x_1\cdots x_d]$. The method of the proof below may be known among specialists.

%%%%%%%%%%%%%%%%%%%Schwarz lemma

\begin{prop}\label{Schwarz}
Let $f\in\mathcal{RSA}(B_{\delta})$. Then, for any $x\in B_{\delta}$, we have
\begin{equation*}
\|f(x)\|\le \|\delta(x)\|.
\end{equation*}  
Moreover, if $B_{\delta}$ contains the noncommutative polydisc 
\begin{equation*}
\mathbb{D}_{nc}^d:=\{(x_1,\ldots, x_d)\;|\;\displaystyle\max_{1\le r\le d}\|x^r\|< 1\},
\end{equation*}
then the Fr\'{e}chet derivative of $f$ at $0\in (B_{\delta})_n$ must be contractive for all $n\ge 1$.
\end{prop}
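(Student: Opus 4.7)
The plan is to leverage the Agler--McCarthy realization formula of Theorem~2.1 to reduce the inequality to an inner-product estimate in the auxiliary Hilbert space. Setting $x=0$ in the formula gives $f(0)=D$, so $f\in\mathcal{RSA}(B_\delta)$ forces $D=0$. The unitarity of $U=\begin{bmatrix}A&B\\ C&D\end{bmatrix}$ then reduces to the block identities $A^*A+C^*C=I$, $B^*B=1$, $A^*B=0$, and $CC^*=1$; tensoring with $I_n$ yields the analogous relations for $\tilde A=A\otimes I_n$, $\tilde B=B\otimes I_n$, $\tilde C=C\otimes I_n$.

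Next, fix $x\in(B_\delta)_n$ and a unit vector $v\in\mathbb{C}^n$, set $\tilde\delta=I_{\mathcal X}\otimes\delta(x)$, and introduce the auxiliary vector $w:=(I-\tilde\delta\tilde A)^{-1}\tilde\delta\tilde Bv$, which satisfies the fixed-point identity $w=\tilde\delta(\tilde Aw+\tilde Bv)$ and gives $f(x)v=\tilde Cw$. Using $\tilde C^*\tilde C=I-\tilde A^*\tilde A$ I rewrite $\|f(x)v\|^2=\|w\|^2-\|\tilde Aw\|^2$; the cross term in $\|\tilde Aw+\tilde Bv\|^2$ vanishes thanks to $\tilde A^*\tilde B=0$, leaving $\|\tilde Aw+\tilde Bv\|^2=\|\tilde Aw\|^2+1$ (since $B$ is an isometry). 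Plugging into $\|w\|^2\le\|\delta(x)\|^2\|\tilde Aw+\tilde Bv\|^2$ and rearranging with $1-\|\delta(x)\|^2\ge 0$ collapses the estimate to $\|f(x)v\|^2\le\|\delta(x)\|^2$, and supremum over $v$ yields the Schwarz-type bound.

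For the contractivity of the Fréchet derivative, the trick is to apply the inequality just proved a second time, now on the smaller polyhedron $B_{\delta_0}=\mathbb{D}_{nc}^d$ associated with $\delta_0(x)=\mathrm{diag}(x_1,\dots,x_d)$, for which $\|\delta_0(x)\|=\max_r\|x^r\|$. Because $\mathbb{D}_{nc}^d\subset B_\delta$, the restriction $f|_{\mathbb{D}_{nc}^d}$ is an nc function vanishing at $0$ and bounded by $1$, hence lies in $\mathcal{RSA}(\mathbb{D}_{nc}^d)$. The first part of the proposition, applied with $\delta_0$, gives $\|f(z)\|\le\max_r\|z^r\|$ for every $z\in\mathbb{D}_{nc}^d$. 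Substituting $tz$ and sending $t\to 0^+$ turns the left-hand side into $t\|\Delta f(0,0)(z)\|+o(t)$ and the right-hand side into $t\|z\|_{\max}$, so $\|\Delta f(0,0)(z)\|\le\|z\|_{\max}$ for every $z\in\mathbb{M}_n^d$, and $\|\Delta f(0,0)\|\le 1$ follows.

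The main obstacle is the first inequality: a naive factor-by-factor bound on the transfer function produces only $\|f(x)\|\le\|\delta(x)\|/(1-\|\delta(x)\|)$, which is far weaker than what is claimed. The decisive inputs that rescue the estimate are (i) $D=0$ forces $\tilde A^*\tilde B=0$ and kills the cross term in the expansion of $\|\tilde Aw+\tilde Bv\|^2$, and (ii) the defect identity $\tilde C^*\tilde C=I-\tilde A^*\tilde A$ turns $\|\tilde Cw\|^2$ into the Hilbert-space defect $\|w\|^2-\|\tilde Aw\|^2$; combined with the fixed-point identity $w=\tilde\delta(\tilde Aw+\tilde Bv)$, this is exactly the combination that lets the bound close cleanly.
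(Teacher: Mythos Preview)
Your argument is correct. For the derivative statement you do essentially what the paper does: restrict $f$ to $\mathbb{D}_{nc}^d=B_{\delta_0}$, apply the first inequality with $\delta_0$, and read off the bound on $\Delta f(0,0)$ (the paper reads it from the off-diagonal block at level $2n$, you read it from the limit $f(tz)/t$; these are equivalent given the Fr\'echet differentiability recorded in the preliminaries).

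For the main inequality, however, your route differs from the paper's. The paper works on the operator level and with the relations coming from $UU^*=I$ (namely $BB^*=I-AA^*$, $CC^*=I$, $CA^*=0$): it expands $r^2I-f(x)f(x)^*$ with $r=\|\delta(x)\|$, factoring out $(C\otimes I)(I-\tilde\delta\tilde A)^{-1}$ on the left and its adjoint on the right, and after cancellations obtains the manifestly positive middle term
\[
r^2I-\tilde\delta\tilde\delta^*+(1-r^2)\,\tilde\delta(AA^*\otimes I)\tilde\delta^*.
\]
You instead work vector-wise and with the $U^*U=I$ relations ($C^*C=I-A^*A$, $A^*B=0$, $B^*B=1$): the fixed-point identity $w=\tilde\delta(\tilde Aw+\tilde Bv)$ and the vanishing cross term let you close the estimate in three lines. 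Both proofs hinge on exactly the same structural fact, that $D=0$ forces the off-diagonal orthogonality ($AC^*=0$ resp.\ $A^*B=0$) without which the quadratic terms would not collapse. Your version is pleasantly short and transparent; the paper's version has the advantage of producing an explicit positive operator expression for the defect $\|\delta(x)\|^2I-f(x)f(x)^*$, which can be useful if one later wants sharper or equality-case information.
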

\begin{proof}
For simplicity, the identity operator is always denoted by $I$ without indicating the matrix size, etc. Since $f$ is in $\mathcal{RSA}(B_{\delta})$, there exist an auxiliary Hilbert space $\mathcal{X}$ and a unitary operator
\begin{equation*}
U=
\begin{bmatrix}
A&B\\
C&0
\end{bmatrix}:
\begin{bmatrix}
\mathcal{X}\otimes\mathbb{C}^s\\
\mathbb{C}
\end{bmatrix}\rightarrow
\begin{bmatrix}
\mathcal{X}\otimes\mathbb{C}^r\\
\mathbb{C}
\end{bmatrix}
\end{equation*}
such that for all $x\in B_{\delta}$, 
\begin{equation*}
f(x)=
\begin{matrix}
C\\
\otimes\\
I
\end{matrix}\left(
I
-
\begin{matrix}
I\\
\otimes\\
\delta(x)
\end{matrix}
\begin{matrix}
A\\
\otimes\\
I
\end{matrix}
\right)^{-1}
\begin{matrix}
I\\
\otimes\\
\delta(x)
\end{matrix}
\begin{matrix}
B\\
\otimes\\
I
\end{matrix}\;\;.
\end{equation*}
Set $r:=\|\delta(x)\|$. Since $U$ is a unitary operator, we have
\begin{equation*}
\begin{bmatrix}
AA^*+BB^*&AC^*\\
CA^*&CC^*
\end{bmatrix}=
\begin{bmatrix}
I&0\\
0&I
\end{bmatrix}.
\end{equation*}
Since $CC^*=I$ and $BB^*=I-AA^*$, we obtain that
\begin{align*}
&r^2I-f(x)f(x)^*\\
&=r^2
\begin{matrix}
\;\;CC^*\\
\otimes\\
I
\end{matrix}-
\begin{matrix}
C\\
\otimes\\
I
\end{matrix}\left(I-
\begin{matrix}
I\\
\otimes\\
\delta(x)
\end{matrix}
\begin{matrix}
A\\
\otimes\\
I
\end{matrix}\right)^{-1}
\begin{matrix}
I\\
\otimes\\
\delta(x)
\end{matrix}
\begin{matrix}
\;\;BB^*\\
\otimes\\
I
\end{matrix}
\begin{matrix}
I\\
\otimes\\
\delta(x)^*
\end{matrix}\left(I-
\begin{matrix}
\;A^*\\
\otimes\\
I
\end{matrix}
\begin{matrix}
I\\
\otimes\\
\delta(x)^*
\end{matrix}\right)^{-1}
\begin{matrix}
\;C^*\\
\otimes\\
I
\end{matrix}\\
&=
\begin{matrix}
C\\
\otimes\\
I
\end{matrix}\left(I-
\begin{matrix}
I\\
\otimes\\
\delta(x)
\end{matrix}
\begin{matrix}
A\\
\otimes\\
I
\end{matrix}\right)^{-1}\left[r^2\left(I-
\begin{matrix}
I\\
\otimes\\
\delta(x)
\end{matrix}
\begin{matrix}
A\\
\otimes\\
I
\end{matrix}\right)\left(I-
\begin{matrix}
\;A^*\\
\otimes\\
I
\end{matrix}
\begin{matrix}
I\\
\otimes\\
\delta(x)^*
\end{matrix}\right)\right.\\
&\;\;\;\;\;\;\;\;\;\;\;\;\;\;\;\;\;\;\;\;\;\;\;\;\;\;\;\;\;\;\;\;\;\;\;\;\;\;\;\;\;-
\left.\begin{matrix}
I\\
\otimes\\
\delta(x)
\end{matrix}
\begin{matrix}
\;\;I-AA^*\\
\otimes\\
I
\end{matrix}
\begin{matrix}
I\\
\otimes\\
\delta(x)^*
\end{matrix}\right]
\left(I-
\begin{matrix}
\;A^*\\
\otimes\\
I
\end{matrix}
\begin{matrix}
I\\
\otimes\\
\delta(x)^*
\end{matrix}\right)^{-1}
\begin{matrix}
\;C^*\\
\otimes\\
I
\end{matrix}.
\end{align*}
As $CA^*=AC^*=0$, it follows that
\begin{align*}
&r^2I-f(x)f(x)^*\\
&=\begin{matrix}
C\\
\otimes\\
I
\end{matrix}\left(I-
\begin{matrix}
I\\
\otimes\\
\delta(x)
\end{matrix}
\begin{matrix}
A\\
\otimes\\
I
\end{matrix}\right)^{-1}\left[r^2\left(I-
\begin{matrix}
I\\
\otimes\\
\delta(x)
\end{matrix}
\begin{matrix}
A\\
\otimes\\
I
\end{matrix}\right)\left(I-
\begin{matrix}
\;A^*\\
\otimes\\
I
\end{matrix}
\begin{matrix}
I\\
\otimes\\
\delta(x)^*
\end{matrix}\right)+r^2\left(I-
\begin{matrix}
I\\
\otimes\\
\delta(x)
\end{matrix}
\begin{matrix}
A\\
\otimes\\
I
\end{matrix}\right)
\begin{matrix}
\;A^*\\
\otimes\\
I
\end{matrix}
\begin{matrix}
I\\
\otimes\\
\delta(x)^*
\end{matrix}\right.\\
&\left.+r^2
\begin{matrix}
I\\
\otimes\\
\delta(x)
\end{matrix}
\begin{matrix}
A\\
\otimes\\
I
\end{matrix}\left(I-
\begin{matrix}
A^*\\
\otimes\\
I
\end{matrix}
\begin{matrix}
I\\
\otimes\\
\delta(x)^*
\end{matrix}\right)-
\begin{matrix}
I\\
\otimes\\
\delta(x)
\end{matrix}
\begin{matrix}
\;\;I-AA^*\\
\otimes\\
I
\end{matrix}
\begin{matrix}
I\\
\otimes\\
\delta(x)^*
\end{matrix}\right]\left(I-
\begin{matrix}
\;A^*\\
\otimes\\
I
\end{matrix}
\begin{matrix}
I\\
\otimes\\
\delta(x)^*
\end{matrix}\right)^{-1}
\begin{matrix}
\;C^*\\
\otimes\\
I
\end{matrix}\\
&=\begin{matrix}
C\\
\otimes\\
I
\end{matrix}\left(I-
\begin{matrix}
I\\
\otimes\\
\delta(x)
\end{matrix}
\begin{matrix}
A\\
\otimes\\
I
\end{matrix}\right)^{-1}\left[r^2I-
\begin{matrix}
I\\
\otimes\\
\delta(x)
\end{matrix}
\begin{matrix}
I\\
\otimes\\
\delta(x)^*
\end{matrix}\right.\\
&\;\;\;\;\;\;\;\;\;\;\;\;\;\;\;\;\;\;\;\;\;\;\;\;\;\;\;\;\;\;\;\;\;\;\;\;\;+(1-r^2)
\left.\begin{matrix}
I\\
\otimes\\
\delta(x)
\end{matrix}
\begin{matrix}
\;\;AA^*\\
\otimes\\
I
\end{matrix}
\begin{matrix}
I\\
\otimes\\
\delta(x)^*
\end{matrix}\right]\left(I-
\begin{matrix}
\;A^*\\
\otimes\\
I
\end{matrix}
\begin{matrix}
I\\
\otimes\\
\delta(x)^*
\end{matrix}\right)^{-1}
\begin{matrix}
\;C^*\\
\otimes\\
I
\end{matrix}\ge 0.
\end{align*}
Hence we conclude that $\|f(x)\|\le \|\delta(x)\|$.

Next, we prove that the Fr\'{e}chet derivative $\Delta f(0,0)$ at $0$ must be contractive on each level. We may assume that $f\in\mathcal{RSA}(\mathbb{D}_{nc}^d)$. For any $n\ge 1$, and $z\in\mathbb{M}_n^d$, there exists $0<t<1$ such that $
\begin{bmatrix}
0& tz\\
0&0
\end{bmatrix}\in (\mathbb{D}_{nc}^d)_n$. Then, we have seen that
\begin{equation*}
\left\|
\begin{bmatrix}
f(x)&w\\
0&f(y)
\end{bmatrix}
\right\|_{2n}=
\left\| f\left(
\begin{bmatrix}
0& tz\\
0& 0
\end{bmatrix}
\right)\right\|_{2n}\le \left\|
\begin{bmatrix}
0& tz\\
0&0
\end{bmatrix}
\right\|_{2n}.
\end{equation*}
Then,
\begin{equation*}
\| w\|_n\le t\|z\|_n,
\end{equation*}
and hence
\begin{equation*}
\|\Delta f(0,0)(z)\|_n\le \|z\|_n,
\end{equation*}
since $\Delta f(0,0)(z)=t^{-1}w$. This means that the Fr\'{e}chet derivative at 0 is contractive.
\end{proof}

%%%%%%%%%%%%%%%%%%%%%%%%%%%%%%

\begin{remark}
$(1)$ \upshape By the same calculation, we can prove an analog of Schwarz lemma for a function that admits a realization formula. Such examples are, the nc Schur-Agler class in the Ball, Marx and Vinnikov framework \cite{BMV}, the operator NC Schur-Agler class studied by Augat and McCarthy \cite{AuM}, the Schur-Agler class \cite{Agl, AT} (note that this class is nothing but the ``level 1" of the nc Schur-Agler class; see \cite[Theorem 8.19]{AMg}), and the contractive multipliers of an irreducible complete Pick Hilbert function space \cite{AMp}. In the last case, we have to calculate the zeros of the injection $b$ in \cite[Theorem 8.2]{AMp}.\\
$(2)$ Knese \cite{Kne} and Anderson, Dritschel and Rovnyak \cite{ADR} studied the part of the classical Schwarz lemma dealing with derivatives in several variables. Actually, the method of the above proof is the same as theirs.
\end{remark}

%%%%%%%%%%%%%%%%%%%%%%%%%%%%%%%

Popescu \cite[Theorem 5.1]{Pop2010} applied his analog of Schwarz lemma to proving a maximum principle for free holomorphic functions on the noncommutative ball in conjunction with the free automorphisms of the noncommutative ball. Next, we will prove an analogous fact in the present context. Note that we may not explicitly assume holomorphy in the next result. Its reason is that local boundedness and holomorphy are equivalent for nc functions.

%%%%%%%%%%%%%%%%%%%%%%%%%%%%mmp

\begin{theorem}\label{mmp}
Let $f$ be an nc function on $B_{\delta}$. If there exists an $x_0\in B_{\delta}$ such that
\begin{equation*}
\|f(x_0)\|\ge\|f(x)\|\;\;\mbox{for all} \;\;x\in B_{\delta},
\end{equation*}
then $f$ must be a constant nc function i.e., there exists a $c\in\mathbb{C}$ such that for all $n\ge 1$ and $x\in (B_{\delta})_n$, $f(x)=cI_n$.
\end{theorem}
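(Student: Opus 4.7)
The plan is to reduce to the Schur-Agler setting and then apply Proposition \ref{Schwarz} via a M\"obius trick on the range. The main obstacle is the absence of free automorphisms of a general $B_\delta$, which rules out Popescu's strategy of moving $x_0$ to the origin; the substitute here will be a M\"obius transformation that acts on values rather than on points.

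After handling $\|f(x_0)\|=0$ trivially, I would rescale $f$ so that $\|f(x_0)\|=1$, which places $f$ in $\mathcal{SA}(B_\delta)$. The realization theorem then yields a unitary
\begin{equation*}
U=\begin{bmatrix} A & B \\ C & D \end{bmatrix}
\end{equation*}
with scalar $D=f(0)\in\mathbb{C}$ and $|D|\le 1$. The case $|D|=1$ already delivers the conclusion: the identities $B^*B+|D|^2=1$ and $CC^*+|D|^2=1$ coming from $U^*U=I$ and $UU^*=I$ force $B=0$ and $C=0$, so the realization collapses to $f(x)=D\,I_n$ on each level, i.e.\ $f\equiv D$.

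The main step is to rule out $|D|<1$. For this I would set $g(x):=\phi_D(f(x))$ with the matricial M\"obius transformation $\phi_D(Z):=(Z-DI)(I-\bar{D}Z)^{-1}$. Since $\phi_D$ is a rational function of $Z$ with scalar coefficients whose denominator remains invertible on $\{\|Z\|\le 1\}$, the composition $g$ respects direct sums and similarities, so $g$ is nc on $B_\delta$ with $g(0)=\phi_D(D)=0$. The operator identity
\begin{equation*}
I-\phi_D(Z)^*\phi_D(Z)=(1-|D|^2)(I-DZ^*)^{-1}(I-Z^*Z)(I-\bar{D}Z)^{-1}
\end{equation*}
then yields $\|g(x)\|\le 1$ throughout $B_\delta$ and $\|g(x_0)\|=1$ (using $|D|<1$ and $\|f(x_0)\|=1$). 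Hence $g\in\mathcal{RSA}(B_\delta)$, and Proposition \ref{Schwarz} gives $\|g(x_0)\|\le\|\delta(x_0)\|<1$, a contradiction. Therefore $|D|=1$, and the previous paragraph completes the argument.
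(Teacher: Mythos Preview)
Your proof is correct and follows essentially the same route as the paper's: normalize to $\|f(x_0)\|=1$, dispose of the case $|f(0)|=1$ via the unitarity relations of the realization, and in the case $|f(0)|<1$ compose with the matricial M\"obius map $\phi_{f(0)}$ on the range to land in $\mathcal{RSA}(B_\delta)$ and derive a contradiction from Proposition~\ref{Schwarz}. The only cosmetic difference is that the paper obtains $\|\phi_\alpha(f(x_0))\|=1$ from the inverse relation $\phi_\alpha^{-1}=\phi_{-\alpha}$, whereas you read it off from the positivity identity for $I-\phi_D(Z)^*\phi_D(Z)$.
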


%%%%%%%%%%%%%%%%%%%%%%%%%%proofmmp

\begin{proof}
Without loss of generality, we may assume that $\|f(x_0)\|=\displaystyle\sup_{x\in B_{\delta}}\|f(x)\|=1$. Then $f$ admits a realization formula. Namely, there exist an auxiliary Hilbert space $\mathcal{X}$ and a unitary operator
\begin{equation*}
U=
\begin{bmatrix}
A&B\\
C&D
\end{bmatrix}:
\begin{bmatrix}
\mathcal{X}\otimes\mathbb{C}^s\\
\mathbb{C}
\end{bmatrix}\rightarrow
\begin{bmatrix}
\mathcal{X}\otimes\mathbb{C}^r\\
\mathbb{C}
\end{bmatrix}
\end{equation*}
such that for all $x\in B_{\delta}$,
\begin{equation*}
f(x)=
\begin{matrix}
D\\
\otimes\\
I
\end{matrix}
+
\begin{matrix}
C\\
\otimes\\
I
\end{matrix}\left(
I
-
\begin{matrix}
I\\
\otimes\\
\delta(x)
\end{matrix}
\begin{matrix}
A\\
\otimes\\
I
\end{matrix}
\right)^{-1}
\begin{matrix}
I\\
\otimes\\
\delta(x)
\end{matrix}
\begin{matrix}
B\\
\otimes\\
I
\end{matrix}\;\;.
\end{equation*}
Since $f$ respects direct sums, $f(0_n)$ is determined only by $f(0_1)$ for all $n\ge 2$, where $0_n$ is the zero of $\mathbb{M}_n^d$.
If $\|f(0)\|=1$, then $|D|=1$ because $\delta(0)=0$. Since $U$ is a unitary operator, we have
\begin{equation*}
CC^*+DD^*=1_{\mathbb{C}}.
\end{equation*}
Hence, we get $C=0$. Therefore, the realization formula implies that $f$ must be a constant nc function. 

We then consider the case when $\|f(0)\|<1$. Set $\alpha I:=f(0)$, and we have $|\alpha|<1$. We define an nc function $\phi_{\alpha}$ on $\overline{\mathbb{D}_{nc}^1}:=\{w\in\mathbb{M}^1\;|\;\|w\|\le1\}$ by  the formula 
\begin{equation*}
\phi_{\alpha}(w):=(w-\alpha I)(I-\overline{\alpha} w)^{-1}. 
\end{equation*}
Then for all $w\in\overline{\mathbb{D}_{nc}^1}$, we have
\begin{align*}
I-\phi_{\alpha}(w)^*\phi_{\alpha}(w)&=I-(I-\alpha w^*)^{-1}(w^*-\overline{\alpha}I)(w-\alpha I)(I-\overline{\alpha} w)^{-1}\\
&=(I-\alpha w^*)^{-1}(I-\alpha w^*)(I-\overline{\alpha}w)(I-\overline{\alpha}w)^{-1}\\
&\;\;-(I-\alpha w^*)^{-1}(w^*-\overline{\alpha}I)(w-\alpha I)(I-\overline{\alpha} w)^{-1}\\
&=(1-|\alpha|^2)(I-\alpha w^*)^{-1}(I-ww^*)(I-\overline{\alpha}w)^{-1}\ge 0.  
\end{align*}
Hence, $\phi_{\alpha}(\overline{\mathbb{D}_{nc}^1})\subset\overline{\mathbb{D}_{nc}^1}$. Moreover, since $(I-\overline{\alpha} w)^{-1}$ is invertible, $\phi_{\alpha}$ maps a strict contraction to a strict contraction. Here, a strict contraction means an operator whose operator norm is less than one. In addition, we can easily see that $\phi_{\alpha}^{-1}=\phi_{-\alpha}$. Therefore,  $\|w\|=1$ if and only if $\|\phi_{\alpha}(w)\|=1$. (Note that this property can be regarded as a special case of \cite[Lemma 4.1]{Pop2010}). Set $g:=\phi_{\alpha}\circ f$. Since $g\in\mathcal{RSA}(B_{\delta})$, Proposition \ref{Schwarz} implies that $\|g(x)\|\le\|\delta(x)\|$ for any $x\in B_{\delta}$. Therefore, we have
\begin{equation*}
\|\phi_{\alpha}(f(x_0))\|\le\|\delta(x_0)\|<1.
\end{equation*}
On the other hand, since $\|f(x_0)\|=1$, we have $\|\phi_{\alpha}(f(x_0))\|=1$, a contradiction. Hence $f$ must be a constant nc function.
\end{proof}

%%%%%%%%%%%%%%%%%%%%%%%%%%%remark2

\begin{remark}
\upshape Salomon, Shalit and Shamovich \cite[Lemma 6.11]{SSS} has already proved a noncommutative analog of maximum principle in a more general setting. However, our proof is quite different from theirs, and still works even in an infinite dimensional setting like \cite[chapter 16]{AMY} and \cite{AuM}. (We do not know how to apply their proof to the infinite dimensional setting.)
 %and a noncommutative function on an infinite dimensional space has a realization formula \cite[Corollary 6.5]{AuM}.
\end{remark}

%%%%%%%%%%%%%%%%%%%%%%%%%%%remark2

\begin{remark}
\upshape Popescu \cite[Theorem 2.7 and Theorem 2.8]{Pop2016} also proved noncommutative analogs of Schwarz lemma and the maximum principle for free holomorphic functions on the regular polyball. With $d=d_1+\cdots d_k$, we define the noncommutative polyball $\mathbf{P_d}^{nc}\subset\mathbb{M}^d$ as the polynomial polyhedron associated with
\begin{equation*}
\delta(x):=
\begin{bmatrix}
[x_{1,1}\cdots x_{1,d_1}]& & \\
 &\ddots& \\
 & & [x_{k,1}\cdots x_{k,d_k}]
\end{bmatrix}.
\end{equation*}

Next, we define the polyball $\mathbf{P_d}\subset\mathbf{P_d}^{nc}$ to be all tuples $x=(x_1,\ldots,x_k)\in\mathbf{P_d}^{nc}$ with the property that the entries of $x_s:=(x_{s,1},\ldots, x_{s, d_s})$ commute with the entries of 
$x_t:=(x_{t,1},\ldots, x_{t, d_t})$ for any distinct $s$, $t\in\{1,\ldots k\}$. The regular polyball of $\mathbb{M}^d$ is defined by
\begin{equation*}
\mathbf{B_d}:=\{x\in\mathbf{P_d}\;|\; \Delta_x (I) \;\mbox{is strictly positive definite}\}, 
\end{equation*}
where for any $x\in(\mathbf{P_d})_n$, $\Delta_x:\mathbb{M}_n\rightarrow\mathbb{M}_n$ is given by
\begin{equation*}
\Delta_x:=(id-\Phi_1)\circ\cdots\circ (id-\Phi_k)
\end{equation*}
and $\Phi_i:\mathbb{M}_n\rightarrow\mathbb{M}_n$ is the completely positive linear map defined by 
\begin{equation*}
\Phi_i(y):=\sum_{j=1}^{d_i} x_{i,j} y x_{i,j}^*.
\end{equation*}
Since every free polynomial is nc, 
\begin{equation*}
\mathbf{P_d}=\{x\in\mathbf{P_d}^{nc}\;|\;x_{s,i}x_{t,j}-x_{t,j}x_{s,i}=0\;\;(1\le s\ne t\le k, 1\le i\le d_s, 1\le j\le d_t)\}
\end{equation*}
equals its $B_{\delta}$-relative full nc envelope 
$[\mathbf{P_d}]_{\mathrm{full}}\cap \mathbf{P_d}^{nc}$ (see \cite[Definition 2.8 and Proposition 2.9]{BMV}). Therefore, by \cite[Corollary 3.4]{BMV}, every bounded nc function on the polyball $\mathbf{P_d}$ can be extended to a bounded nc function on the noncommutative polyball $\mathbf{P_d}^{nc}$ without increasing its operator norm.  Hence we can prove an nc analog of Schwarz lemma and the maximum principle for bounded nc functions on $\mathbf{P_d}$. However, the author cannot treat the regular polyball $\mathbf{B_d}$ with our methods based on the nc Schur-Agler class and its realization formula.

\end{remark}

%%%%%%%%%%%%%%%%%%%%%%%%%%%%%%

At the end of this section, we will show that the Cayley transforms between the open unit disk and the right half plane defined by
\begin{equation*}
z=\frac{1+x}{1-x}\;\;\;(x\in\mathbb{D}),\;\;\;x=\frac{z-1}{z+1}\;\;\;(\Re{z}>0)
\end{equation*}
give a homeomorphism between $\mathcal{RSA}(B_{\delta})$ and $\mathrm{RHA}(B_{\delta})$. In the course of proving this fact, we prove the next analog of Schwarz lemma for $\mathrm{RHA}(B_{\delta})$ as a consequence of Proposition \ref{Schwarz}.

%%%%%%%%%%%%%%%%%%%%%%%%%%%%%%
\begin{prop}\label{corSchwarz}
For any $h\in\mathrm{RHA}(B_{\delta})$,
\begin{enumerate}
\item $\displaystyle\Re h(x)\ge\frac{1-\|\delta(x)\|}{1+\|\delta(x)\|}I\;\;\;(x\in B_{\delta})$,
\item $\displaystyle\|h(x)\|\le\frac{1+\|\delta(x)\|}{1-\|\delta(x)\|}\;\;\;(x\in B_{\delta})$.
\end{enumerate}
\end{prop}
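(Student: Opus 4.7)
The plan is to pull $h$ back to a regular nc Schur-Agler function via the Cayley transform, apply Proposition \ref{Schwarz}, and then push the resulting norm estimate forward. Given $h\in\mathrm{RHA}(B_{\delta})$, the condition $\Re h(x)\ge 0$ implies $\Re(h(x)+I)\ge I$, so $h(x)+I$ is invertible at every point $x\in B_{\delta}$. I would set
\[
f(x):=(h(x)-I)(h(x)+I)^{-1},
\]
which is graded, nc, and vanishes at $0$. The contractivity $\|f(x)\|\le 1$ reduces to the operator inequality
\[
(h+I)^{*}(h+I)-(h-I)^{*}(h-I)=2(h+h^{*})=4\Re h\ge 0,
\]
so $f\in\mathcal{RSA}(B_{\delta})$, and Proposition \ref{Schwarz} gives $\|f(x)\|\le\|\delta(x)\|$.

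To recover the two inequalities, invert the Cayley transform to get $h=(I+f)(I-f)^{-1}=(I-f)^{-1}(I+f)$, the two orders agreeing because $I+f$ and $I-f$ commute. Sandwiching $h+h^{*}$ between $(I-f^{*})$ on the left and $(I-f)$ on the right produces the identity
\[
\Re h=(I-f^{*})^{-1}(I-f^{*}f)(I-f)^{-1}.
\]
For (1), I combine $I-f^{*}f\ge(1-\|f\|^{2})I$ with the operator inequality $(I-f^{*})^{-1}(I-f)^{-1}\ge(1+\|f\|)^{-2}I$ (which follows from $\|I-f\|\le 1+\|f\|$) to obtain
\[
\Re h\ge\frac{1-\|f\|^{2}}{(1+\|f\|)^{2}}I=\frac{1-\|f\|}{1+\|f\|}I,
\]
and then invoke the fact that $t\mapsto(1-t)/(1+t)$ is decreasing on $[0,1)$ together with $\|f(x)\|\le\|\delta(x)\|$. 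For (2), I estimate $\|h\|\le\|I+f\|\,\|(I-f)^{-1}\|\le(1+\|f\|)/(1-\|f\|)$ and use monotonicity of the increasing function $t\mapsto(1+t)/(1-t)$.

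The main technical point requiring care is the verification that $f$ actually belongs to $\mathcal{RSA}(B_{\delta})$, since both the invertibility of $h+I$ and the equivalence between $\|f\|\le 1$ and $\Re h\ge 0$ are operator-level statements in which $h(x)$ and $h(x)^{*}$ need not commute. The key identity $(h+I)^{*}(h+I)-(h-I)^{*}(h-I)=4\Re h$ is an honest operator identity that bypasses commutativity issues, and once it is in place the remainder of the argument is routine symmetric sandwiching, with Proposition \ref{Schwarz} supplying all the analytic content.
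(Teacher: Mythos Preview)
Your proof is correct and follows essentially the same route as the paper: define $f=(h-I)(h+I)^{-1}\in\mathcal{RSA}(B_\delta)$, apply Proposition~\ref{Schwarz} to get $\|f(x)\|\le\|\delta(x)\|$, and then derive both inequalities from the identity $\Re h=(I-f^*)^{-1}(I-f^*f)(I-f)^{-1}$ together with the Neumann-series bound $\|(I-f)^{-1}\|\le(1-\|f\|)^{-1}$. Your explicit verification that $f$ lies in $\mathcal{RSA}(B_\delta)$ via the identity $(h+I)^*(h+I)-(h-I)^*(h-I)=4\Re h$ is a detail the paper simply asserts, and your intermediate step of bounding by $\|f\|$ before invoking monotonicity in $\|\delta(x)\|$ is cosmetic.
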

\begin{proof}
Let $f(x):=(h(x)-I)(h(x)+I)^{-1}\in\mathcal{RSA}(B_{\delta})$. By Proposition \ref{Schwarz}, $\|f(x)\|\le\|\delta(x)\|$. Note that $h(x)=(I+f(x))(I-f(x))^{-1}$. Item (2) immediately follows from this expression with the aid of the Neumann series. More precisely, we have
\begin{equation*}
\|(I-f(x))^{-1}\|\le\sum_{k=0}^{\infty}\|f(x)\|^k\le\sum_{k=0}^{\infty}\|\delta(x)\|^k=\frac{1}{1-\|\delta(x)\|}.
\end{equation*}
 Moreover, 
\begin{align*}
2\Re h(x)
&=(I+f(x))(I-f(x))^{-1}+(I-f(x)^*)^{-1}(I+f(x)^*)\\
&=(I-f(x)^*)^{-1}[(I-f(x)^*)(I+f(x))+(I+f(x)^*)(I-f(x))](I-f(x))^{-1}\\
&=2(I-f(x)^*)^{-1}(I-f(x)^*f(x))(I-f(x))^{-1}\\
&\ge2(1-\|\delta(x)\|^2)(I-f(x)^*)^{-1}(I-f(x))^{-1}.
\end{align*}
Since $\|I-f(x)\|\le1+\|\delta(x)\|$, we have $(1+\|\delta(x)\|)^2I\ge(I-f(x))(I-f(x)^*)$ and hence, $\displaystyle\frac{1}{(1+\|\delta(x)\|)^2}I\le(I-f(x)^*)^{-1}(I-f(x))^{-1}$. So, 
$\displaystyle\Re h(x)\ge\frac{1-\|\delta(x)\|}{1+\|\delta(x)\|}I$.
\end{proof}

%%%%%%%%%%%%%%%%%%%%%%%%%%%%%%%%

\begin{remark}
\upshape One can also prove the above inequalities by using \cite[Lemma 3.3]{PPT}.
\end{remark}

%%%%%%%%%%%%%%%%%%%%%%%%%%%%%%%%

\begin{prop}\label{homeo}
The Cayley transforms $f\leftrightarrow h$ between $\mathcal{RSA}(B_{\delta})$ and $\mathrm{RHA}(B_{\delta})$ defined by
\begin{align*}
h(x)&:=(I+f(x))(I-f(x))^{-1}\;\;\;(f\in\mathcal{RSA}(B_{\delta})),\\
f(x)&:=(h(x)-I)(h(x)-I)^{-1}\;\;\;(h\in\mathrm{RHA}(B_{\delta}))
\end{align*}
are homeomorphisms.
\end{prop}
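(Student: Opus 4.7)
The plan is to verify first that the two Cayley transforms are well-defined mutually inverse bijections between $\mathcal{RSA}(B_{\delta})$ and $\mathrm{RHA}(B_{\delta})$, and then to show that each map is continuous using the uniform bounds supplied by Propositions \ref{Schwarz} and \ref{corSchwarz}. Well-definedness of $f \mapsto h$ requires that $I-f(x)$ be invertible for every $x \in B_{\delta}$, which follows because $\|f(x)\| \le \|\delta(x)\| < 1$ by Proposition \ref{Schwarz}; the resulting $h$ is nc (a rational combination of nc objects), satisfies $h(0)=I$, and a short algebraic identity in the spirit of the computation at the end of the proof of Proposition \ref{corSchwarz} shows $\Re h(x) = (I-f(x)^*)^{-1}(I-f(x)^*f(x))(I-f(x))^{-1} \ge 0$. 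Conversely, for $h \in \mathrm{RHA}(B_{\delta})$, the fact that $\Re h(x) \ge 0$ makes $h(x)+I$ have strictly positive real part, hence invertible, so $f = (h-I)(h+I)^{-1}$ is well-defined, nc, satisfies $f(0)=0$, and a parallel identity gives $I - f(x)^*f(x) \ge 0$; the two maps are visibly inverse to each other.

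For continuity of $f \mapsto h$, fix $0 < r < 1$ and work on $K_{\delta,r}$. By Proposition \ref{Schwarz} we have $\|f(x)\|, \|f_n(x)\| \le r$ for every $x \in K_{\delta,r}$, so the Neumann series yields the uniform estimate $\|(I-f(x))^{-1}\|, \|(I-f_n(x))^{-1}\| \le (1-r)^{-1}$. Writing
\begin{equation*}
h_n - h = (I-f_n)^{-1}\bigl[(I+f_n)(I-f)-(I-f_n)(I+f)\bigr](I-f)^{-1} = 2(I-f_n)^{-1}(f_n-f)(I-f)^{-1},
\end{equation*}
one obtains $\|h_n(x)-h(x)\| \le 2(1-r)^{-2}\|f_n(x)-f(x)\|$ uniformly on $K_{\delta,r}$, establishing continuity.

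For continuity of $h \mapsto f$, the key is to get a uniform lower bound on $h(x)+I$ on $K_{\delta,r}$. Proposition \ref{corSchwarz}(1) gives $\Re h(x) \ge \frac{1-r}{1+r}I$, hence $\Re(h(x)+I) \ge \frac{2}{1+r}I$, which forces $\|(h(x)+I)^{-1}\| \le \frac{1+r}{2}$. The same bound applies to each $h_n$ once $h_n \to h$ uniformly (in fact a purely algebraic argument using $\Re h_n \ge 0$ already yields it). Combined with the norm bound $\|h(x)\| \le \frac{1+r}{1-r}$ from Proposition \ref{corSchwarz}(2), the identity
\begin{equation*}
f_n - f = (h_n-h)(h_n+I)^{-1} + (h-I)(h_n+I)^{-1}(h-h_n)(h+I)^{-1}
\end{equation*}
produces a uniform estimate of the form $\|f_n(x)-f(x)\| \le C_r\|h_n(x)-h(x)\|$ on $K_{\delta,r}$, which finishes the argument.

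The main obstacle is precisely the uniform invertibility step in each direction; without it, one only has pointwise inverses. Proposition \ref{Schwarz} disposes of it immediately for the direction $f\mapsto h$, and Proposition \ref{corSchwarz}(1) (which itself was derived from Proposition \ref{Schwarz}) is exactly what is needed for the direction $h\mapsto f$. Once those uniform bounds are in hand, the remaining continuity estimates are routine applications of the standard resolvent/Cayley identities above.
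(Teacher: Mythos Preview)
Your proposal is correct and follows essentially the same route as the paper: reduce to sequential continuity on each $K_{\delta,r}$, use Proposition~\ref{Schwarz} to bound $\|(I-f)^{-1}\|$ uniformly for the direction $f\mapsto h$, and use Proposition~\ref{corSchwarz} to bound $\|(h+I)^{-1}\|$ and $\|h\|$ uniformly for the direction $h\mapsto f$, then push the difference through a resolvent-type identity. The only cosmetic differences are that your single identity $h_n-h=2(I-f_n)^{-1}(f_n-f)(I-f)^{-1}$ yields the constant $2$ where the paper's two-term splitting gives $3$, and your bound $\|(h+I)^{-1}\|\le\frac{1+r}{2}$, obtained from $\Re(h+I)\ge\frac{2}{1+r}I$, is sharper than the paper's $\sqrt{\frac{1+r}{2(1-r)}}$ coming from $(h+I)(h^*+I)$; neither difference is material.
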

\begin{proof}
It is sufficient to prove that these maps are sequentially continuous.

Suppose that a sequence $\{f_n\}_{n=1}^{\infty}$ converges to $f$ in $\mathcal{RSA}(B_{\delta})$ with the topology of uniform convergence on closed polynomial polyhedra $K_{\delta, r}$. By the resolvent identity and Proposition \ref{Schwarz}, we have
\begin{align*}
&\|(I+f_n(x))(I-f_n(x))^{-1}-(I+f(x))(I-f(x))^{-1}\|\\
&\le \|(I+f_n(x))(I-f_n(x))^{-1}-(I+f_n(x))(I-f(x))^{-1}\|\\
&\;\;\;+\|(I+f_n(x))(I-f(x))^{-1}-(I+f(x))(I-f(x))^{-1}\|\\
&\le 2\|(I-f_n(x))^{-1}(f_n(x)-f(x))(I-f(x))^{-1}\|+\|f_n(x)-f(x)\|\|(I-f(x))^{-1}\|\\
& \le3\frac{1}{(1-\|\delta(x)\|)^2}\|f_n(x)-f(x)\|.
\end{align*}
Therefore, $(I+f_n(x))(I-f_n(x))^{-1}$ converges to $(I+f(x))(I-f(x))^{-1}$ uniformly on each $K_{\delta, r}$.

Let us prove that the inverse mapping is also continuous. Suppose that a sequence $\{h_n\}_{n=1}^{\infty}$ converges to $h$ in $\mathrm{RHA}(B_{\delta})$. Note that for any functions $H\in\mathrm{RHA}(B_{\delta})$, inequality (1) in Proposition \ref{corSchwarz} implies
\begin{equation*}
\|(H(x)+I)^{-1}\|\le\sqrt{\frac{1+\|\delta(x)\|}{2(1-\|\delta(x)\|)}}<\sqrt{\frac{1}{1-\|\delta(x)\|}}
\end{equation*}
by considering $(H(x)+I)(H(x)^*+I)$. Therefore, by the resolvent identity and inequality (2) in Proposition \ref{corSchwarz}, we have
\begin{align*}
&\|(h_n(x)-I)(h_n(x)+I)^{-1}-(h(x)-I)(h(x)+I)^{-1}\|\\
&\le\sqrt{\frac{1}{(1-\|\delta(x)\|)}}\|h_n(x)-h(x)\|+\frac{2}{(1-\|\delta(x)\|)^2}\|h_n(x)-h(x)\|.
\end{align*}
So, $(h_n(x)-I)(h_n(x)+I)^{-1}$ converges to $(h(x)-I)(h(x)+I)^{-1}$ uniformly on each $K_{\delta, r}$.
\end{proof}

%%%%%%%%%%%%%%%%%%%%%%%%

Pascoe, Passer and Tully-Doyle \cite[Proposition 2.2]{PPT} showed that the free Herglotz-Agler class is compact in the pointwise convergence topology in the infinite dimensional setting. In our finite dimensional setting, we can prove that $\mathrm{RHA}(B_{\delta})$ is compact in the topology defined by a certain uniform convergence. 

The {\bf disjoint union (du) topology} on $\mathbb{M}^d$ is the topology consisting of all the sets $\Omega$ such that each section $\Omega_n$ is open in the Euclidean topology on $\mathbb{M}_n^d$. Here, we consider the uniform convergence on du compact sets. Note that a du compact set is free compact. Then, \cite[Proposition 4.14]{AMg} and Proposition \ref{homeo} imply the following result:

%%%%%%%%%%%%%%%%%%%%%%%
\begin{prop}
Let $\{h_n\}_{n=1}^{\infty}$ be a sequence in $\mathrm{RHA}(B_{\delta})$. Then, there exists a subsequence $\{h_{n_k}\}$ and $h\in\mathrm{RHA}(B_{\delta})$ such that $\{h_{n_k}\}$ converges to h uniformly on each du compact subset of $B_{\delta}$.
\end{prop}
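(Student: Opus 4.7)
The plan is to lift this from the nc Schur-Agler side via the Cayley transform of Proposition \ref{homeo}. The starting observation is that any du compact subset $E \subset B_{\delta}$ sits inside some closed polynomial polyhedron $K_{\delta, r}$. Indeed, each $\mathbb{M}_n^d$ is clopen in the du topology, so any du compact set is supported on finitely many levels and has Euclidean compact sections; consequently, the continuous scalar function $x \mapsto \|\delta(x)\|$ attains a maximum $r < 1$ on $E$, giving $E \subset K_{\delta, r}$. This embedding is what will allow me to transport the continuity of Proposition \ref{homeo}, which is stated for the $K_{\delta, r}$-topology, to the weaker topology of uniform convergence on du compact sets.

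Given $\{h_n\} \subset \mathrm{RHA}(B_{\delta})$, I would apply the inverse Cayley transform to produce $f_n := (h_n - I)(h_n + I)^{-1} \in \mathcal{RSA}(B_{\delta})$; the operator $h_n(x) + I$ is invertible since $\Re h_n(x) \ge 0$ by Proposition \ref{corSchwarz}(1). By \cite[Proposition 4.14]{AMg} (a normal families theorem for the nc Schur-Agler class; recall the hint that every du compact set is free compact), there exist a subsequence $\{f_{n_k}\}$ and $f \in \mathcal{SA}(B_{\delta})$ with $f_{n_k} \to f$ uniformly on each du compact subset of $B_{\delta}$. Since $f_{n_k}(0) = 0$ for every $k$, the limit satisfies $f(0) = 0$, so in fact $f \in \mathcal{RSA}(B_{\delta})$.

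Set $h := (I+f)(I-f)^{-1} \in \mathrm{RHA}(B_{\delta})$. To finish, I would recycle the resolvent-identity estimate from the proof of Proposition \ref{homeo}: on a du compact $E \subset K_{\delta, r}$ one obtains
\begin{equation*}
\sup_{x \in E} \|h_{n_k}(x) - h(x)\| \le \frac{3}{(1-r)^2}\sup_{x \in E} \|f_{n_k}(x) - f(x)\|,
\end{equation*}
and the right-hand side tends to $0$ by the previous step. The only delicate point is precisely the verification that Proposition \ref{homeo}, whose topology is that of uniform convergence on the $K_{\delta, r}$'s, yields uniform convergence on arbitrary du compact sets; the containment $E \subset K_{\delta, r}$ from the first paragraph resolves this, and no further obstacle arises.
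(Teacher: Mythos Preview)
Your proposal is correct and follows exactly the route the paper indicates: the paper's proof is merely the one-line remark that \cite[Proposition 4.14]{AMg} together with Proposition \ref{homeo} yields the result, and you have faithfully unpacked that sketch. The additional details you supply --- that every du compact set lies in some $K_{\delta,r}$ so that the estimate from Proposition \ref{homeo} applies, and that the limit $f$ inherits $f(0)=0$ and hence lands in $\mathcal{RSA}(B_{\delta})$ --- are precisely the points the paper leaves implicit.
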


%%%%%%%%%%%%%%%%%%%%%%%%%%%%characterization of free Herglotz

\section{Polynomial approximation theorem}

In this final section, we will give a polynomial approximation type characterization of regular free Herglotz-Agler functions. We have seen that $\mathcal{RSA}(B_{\delta})$ and $\mathrm{RHA}(B_{\delta})$ are homeomorphic (Proposition \ref{homeo}). We need the following fact:

\begin{theorem}$($\cite[Theorem 3.3]{Koj}$)$\label{Kojin}
A graded function $f$  on $B_{\delta}$ belongs to $\mathcal{SA}(B_{\delta})$ if and only if there exists a sequence of free polynomials $\{p_n\}_{n=1}^{\infty}$ such that $p_n$ converges to $f$ uniformly on each free compact subset of $B_{\delta}$, and the norm of $p_n$ is uniformly less than one.
\end{theorem}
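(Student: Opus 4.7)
The plan is to prove the two directions separately. The sufficiency (``if'') direction is the easy half: if free polynomials $p_n$ converge to $f$ uniformly on each free compact subset of $B_\delta$ with $\sup_n\|p_n\|_\infty \le 1$, then $f$ is graded and respects direct sums and similarities, since each of these is an algebraic identity that passes to pointwise limits on each Euclidean-open level $(B_\delta)_n$, and $\|f(x)\| \le 1$ on $B_\delta$ follows by taking the limit at each point. Hence $f \in \mathcal{SA}(B_\delta)$.

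For the necessity (``only if'') direction I would apply the Agler--McCarthy realization formula (Theorem 2.2) to express $f$ in transfer-function form via a unitary $U$ with block entries $A,B,C,D$ on $(\mathcal{X}\otimes\mathbb{C}^s)\oplus\mathbb{C}$. Since $\|(I\otimes\delta(x))(A\otimes I)\| \le \|\delta(x)\|<1$ on $B_\delta$, the resolvent expands as a Neumann series
\begin{equation*}
\bigl(I-(I\otimes\delta(x))(A\otimes I)\bigr)^{-1}=\sum_{k=0}^{\infty}\bigl((I\otimes\delta(x))(A\otimes I)\bigr)^{k},
\end{equation*}
whose $N$-th partial sum, substituted into the realization formula, yields a free polynomial $q_N$. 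A geometric tail bound with rate $r^k$ on $K_{\delta,r}$ gives $q_N \to f$ uniformly on each $K_{\delta,r}$ with $r<1$; since every free compact subset of $B_\delta$ lies inside some such $K_{\delta,r}$, this delivers the required mode of convergence.

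The main obstacle will be producing polynomials whose sup-norm on the \emph{entirety} of $B_\delta$ stays uniformly below one. A bare Neumann truncation breaks the unitary cancellation inherent in the realization, and $q_N$ can easily overshoot norm one near $\partial B_\delta$. My intended remedy is to first approximate $U$ by its compressions to an increasing exhaustion $\mathcal{X}_k \uparrow \mathcal{X}$ of finite-dimensional subspaces: each compressed block matrix is a contraction, its transfer function $g_k$ is a rational nc function lying in $\mathcal{SA}(B_\delta)$ by a second application of Theorem 2.2, and strong-resolvent considerations yield $g_k \to f$ on free compact subsets of $B_\delta$. Within each finite-dimensional model one can, via a small inner rescaling of the state operator, ensure that it is a strict contraction; this forces the Neumann tail of $g_k$ to decay uniformly on \emph{all} of $B_\delta$, and a final normalising factor $\rho_k<1$ yields polynomial approximants of $g_k$ with sup-norm strictly less than one on $B_\delta$. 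A diagonal selection over $k$ and the truncation depth then furnishes the desired sequence $p_n$, uniformly of norm less than one and converging to $f$ on every free compact subset.
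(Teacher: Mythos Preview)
This theorem is not proved in the present paper: it is quoted verbatim from the author's earlier work \cite{Koj} and used as a black box (the only additional information here is Remark~4.2, which says that the construction in \cite{Koj} preserves the condition $p_n(0)=0$). So there is no ``paper's own proof'' to compare your proposal against.

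On the merits of your proposal: the ``if'' direction is fine, and for the ``only if'' direction your overall plan (realization formula $\Rightarrow$ Neumann series $\Rightarrow$ polynomial truncation, followed by a repair to restore the global norm bound) is the natural one. However, your specific repair has a gap. You propose to compress the unitary colligation $U$ to finite-dimensional subspaces and then perform ``a small inner rescaling of the state operator'' to make the compressed $A$-block a strict contraction. The problem is that replacing $A_k$ by $tA_k$ while leaving $B_k,C_k,D$ unchanged need not preserve contractivity of the full $2\times 2$ block matrix, so you lose the very Schur--Agler membership you need in order to invoke Theorem~2.1 a second time. (Contractivity of a $2\times 2$ operator matrix is \emph{not} monotone in the norm of a single block.) Consequently your modified transfer function is not guaranteed to have sup-norm $\le 1$ on $B_\delta$, and the subsequent normalisation by $\rho_k$ has nothing to grab onto.

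A cleaner fix, which also makes the finite-dimensional compression step unnecessary, is to leave the unitary colligation untouched and instead replace $\delta(x)$ by $t\,\delta(x)$ for $0<t<1$ in the realization. The resulting function $f_t$ is still the transfer function of the \emph{same} unitary $U$ evaluated at a strict contraction, hence $f_t\in\mathcal{SA}(B_\delta)$; moreover $\|t\,\delta(x)\|\le t<1$ uniformly on $B_\delta$, so the Neumann tail of $f_t$ decays uniformly on all of $B_\delta$. The truncations $q_{N,t}$ are genuine free polynomials (each term $C(\,t\delta(x)A)^k t\delta(x)B$ is scalar-valued and polynomial in the entries of $\delta(x)$, no finite-dimensional reduction needed), they satisfy $\|q_{N,t}-f_t\|_{B_\delta}\to 0$, and a final normalisation $q_{N,t}/(1+\|q_{N,t}-f_t\|_{B_\delta})$ brings the sup-norm back under $1$. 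Letting $t\uparrow 1$ along a sequence and choosing $N$ accordingly via a diagonal argument then gives the desired $p_n$. This is almost certainly closer to what \cite{Koj} actually does, and it is consistent with Remark~4.2 since $\delta(0)=0$ forces $q_{N,t}(0)=D=f(0)$.
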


%%%%%%%%%%%%%%%%%%%%%%%%

\begin{remark}
$(1)$ \upshape The sequence in Theorem \ref{Kojin} apparently converges on each $K_{\delta,r}$. It is appropriate to consider this convergence because every closed polynomial polyhedron is not free compact (see the discussion below \cite[Corollary 3.2]{Koj}).\\
$(2)$ By construction, if $\delta(0)=0$ and $f(0)=0$, then the polynomials $p_n$ in Theorem \ref{Kojin} must satisfy $p_n(0)=0$.
\end{remark}

Theorem \ref{Kojin} implies the following lemma:

%%%%%%%%%%%%%%%%%%%%%%%

\begin{lemma}\label{lem}
For any $h\in\mathrm{RHA}(B_{\delta})$, there exists a sequence of free polynomials $\{p_n\}_{n=1}^{\infty}$ such that the restriction of each $p_n$ to $B_{\delta}$ is in $\mathcal{RSA}(B_{\delta})$ and $(1+p_n)(1-p_n)^{-1}$ converges to $h$ uniformly on each $K_{\delta, r}$. 
\end{lemma}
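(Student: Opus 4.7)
The plan is to apply the Cayley transform to move the problem from $\mathrm{RHA}(B_{\delta})$ into $\mathcal{RSA}(B_{\delta})$, invoke the polynomial approximation theorem (Theorem \ref{Kojin}) there, and then push the resulting sequence back via Proposition \ref{homeo}.

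Concretely, given $h\in\mathrm{RHA}(B_{\delta})$, define $f(x):=(h(x)-I)(h(x)+I)^{-1}$. By Proposition \ref{homeo}, $f\in\mathcal{RSA}(B_{\delta})$, and because $h(0)=I$ one has $f(0)=0$, so that $f$ is in fact a regular nc Schur-Agler function. Applying Theorem \ref{Kojin} together with Remark (2) following it, one obtains a sequence of free polynomials $\{p_n\}_{n=1}^{\infty}$ with $p_n(0)=0$, whose norms are uniformly bounded by $1$ on $B_{\delta}$, and which converges to $f$ uniformly on each free compact subset of $B_{\delta}$. The uniform norm bound ensures that each restriction $p_n|_{B_{\delta}}$ lies in $\mathcal{RSA}(B_{\delta})$.

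To upgrade the convergence from free compact subsets to the closed polynomial polyhedra $K_{\delta,r}$ (which, as noted in Remark (1), are not free compact in general), one uses exactly the observation of Remark (1): the construction underlying Theorem \ref{Kojin} in fact yields uniform convergence on each $K_{\delta,r}$. Hence $p_n\to f$ uniformly on every $K_{\delta,r}$ with $0<r<1$.

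Finally, since the Cayley transform $g\mapsto (I+g)(I-g)^{-1}$ from $\mathcal{RSA}(B_{\delta})$ to $\mathrm{RHA}(B_{\delta})$ is continuous with respect to the topology of uniform convergence on closed polynomial polyhedra (Proposition \ref{homeo}), the sequence $(I+p_n)(I-p_n)^{-1}$ converges uniformly on each $K_{\delta,r}$ to $(I+f)(I-f)^{-1}=h$, which is the required conclusion. There is no real obstacle here; the entire lemma is a straightforward combination of the Cayley-transform homeomorphism established in Section 3 with the previously known polynomial approximation result for the regular nc Schur-Agler class, and the only point requiring care is the passage from free-compact convergence to convergence on $K_{\delta,r}$, which is supplied by Remark (1).
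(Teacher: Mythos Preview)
Your proof is correct and follows exactly the same route as the paper: define $f(x):=(h(x)-I)(h(x)+I)^{-1}\in\mathcal{RSA}(B_{\delta})$, invoke Theorem \ref{Kojin} (together with the two remarks following it) to obtain the approximating polynomials $p_n\in\mathcal{RSA}(B_{\delta})$, and then apply the continuity of the Cayley transform from Proposition \ref{homeo}. The paper's own proof is simply a one-line sketch of this argument, so you have supplied the details it omits.
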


\begin{proof}
Define a regular nc Schur-Agler function $f$ by $f(x):=(h(x)-I)(h(x)+I)^{-1}$. Then, this lemma can easily be proved by Proposition \ref{homeo} and Theorem \ref{Kojin} .
\end{proof}

%%%%%%%%%%%%%%%%%%%%%%%%%

The next result gives a polynomial approximation type characterization of regular free Herglotz-Agler functions. 

%%%%%%%%%%%%%%%%%%%%%%%%%

\begin{theorem}
A graded function $h$ on $B_{\delta}$ belongs to $\mathrm{RHA}(B_{\delta})$ if and only if there exists a sequence of free polynomials $\{p_n\}_{n=1}^{\infty}$ such that $p_n|_{B_{\delta}}\in \mathrm{RHA}(B_{\delta})$ and $p_n$ converges to $h$ uniformly on every $K_{\delta, r}$.
\end{theorem}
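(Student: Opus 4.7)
My plan is to prove both implications, with the hard $(\Rightarrow)$ direction combining a Cayley transform reduction, a Fej\'er-type polynomial approximation on the scalar disk, and a Sz.-Nagy dilation argument.

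The $(\Leftarrow)$ direction should be routine: if free polynomials $\{p_n\}$ with $p_n|_{B_\delta}\in\mathrm{RHA}(B_\delta)$ converge to $h$ uniformly on every $K_{\delta,r}$, then being graded, respecting direct sums and similarities, the normalization $h(0)=I$, and the positivity $\Re h(x)\ge 0$ all pass to such uniform limits, so $h\in\mathrm{RHA}(B_\delta)$.

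For the $(\Rightarrow)$ direction, I would first apply Lemma \ref{lem} to produce free polynomials $p_n$ with $p_n|_{B_\delta}\in\mathcal{RSA}(B_\delta)$ such that $h_n:=(I+p_n)(I-p_n)^{-1}$ converges to $h$ uniformly on every $K_{\delta,r}$. The functions $h_n$ are not polynomials, and the main obstacle is that the naive truncation $I+2\sum_{k=1}^{N}p_n^k$ generally violates $\Re\ge 0$ (this already fails in one scalar variable for the Cayley transform $\varphi(w):=(1+w)/(1-w)$). I plan to work around this by first approximating $\varphi$ on $\mathbb{D}$ by scalar Herglotz polynomials: let $\sigma_m$ be the Ces\`aro means of the Taylor series of $\varphi$, so that $\Re\sigma_m(e^{i\theta})$ is a Fej\'er kernel. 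Then $\sigma_m(0)=1$, $\Re\sigma_m\ge 0$ on $\overline{\mathbb{D}}$ (positivity on the circle propagates inward by the minimum principle for harmonic functions), and $\sigma_m\to\varphi$ uniformly on $\{|w|\le r\}$ for every $r<1$.

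I would then define $Q_{n,m}(x):=\sigma_m(p_n(x))$, which is a free polynomial with $Q_{n,m}(0)=I$. Since $\|p_n(x)\|\le 1$ on $B_\delta$, the Sz.-Nagy dilation of $p_n(x)$ to a unitary $U$ on a larger Hilbert space, together with the spectral theorem, gives $\Re\sigma_m(U)\ge 0$, and hence $\Re Q_{n,m}(x)\ge 0$ as a compression of a positive operator; thus $Q_{n,m}|_{B_\delta}\in\mathrm{RHA}(B_\delta)$. For the error estimate, Proposition \ref{Schwarz} yields $\|p_n(x)\|\le\|\delta(x)\|\le r$ on $K_{\delta,r}$, and the contractive holomorphic functional calculus provides
\begin{equation*}
\|Q_{n,m}(x)-h_n(x)\|\le\sup_{|w|\le r}|\sigma_m(w)-\varphi(w)|.
\end{equation*}
A diagonal choice $m=m(n)$ growing fast enough to kill both this quantity and $\|h_n-h\|_{K_{\delta,r}}$ should then exhibit polynomials $Q_n:=Q_{n,m(n)}$ in $\mathrm{RHA}(B_\delta)$ converging to $h$ uniformly on every $K_{\delta,r}$, as required.
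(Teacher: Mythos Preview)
Your argument is correct and genuinely different from the paper's. Both proofs begin identically: invoke Lemma~\ref{lem} to obtain free polynomials $p_n\in\mathcal{RSA}(B_\delta)$ with $h_n=(I+p_n)(I-p_n)^{-1}\to h$ uniformly on each $K_{\delta,r}$, and both recognize that one must replace $h_n$ by Herglotz-Agler \emph{polynomials}. The divergence is in how this replacement is carried out. The paper damps $p_n$ by a factor $r_n\uparrow 1$ and truncates the Neumann series, setting $q_n=(I+r_np_n)\sum_{j=0}^{L_n}(r_np_n)^j$; positivity of $\Re q_n$ is obtained by an explicit operator inequality showing $\Re(I+r_nf)(I-r_nf)^{-1}\ge\frac{1-r_n^2}{4}I$ uniformly in $x$, and then choosing $L_n$ large enough that the truncation error does not destroy this margin. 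You instead apply the Ces\`aro means $\sigma_m$ of $\varphi(w)=(1+w)/(1-w)$ to $p_n$, and deduce $\Re\sigma_m(p_n(x))\ge 0$ from the nonnegativity of the Fej\'er kernel on $\mathbb{T}$ together with Sz.-Nagy dilation of the contraction $p_n(x)$ to a unitary. Your error estimate then follows from von Neumann's inequality after the rescaling $T=p_n(x)/r$, which is exactly the ``contractive holomorphic functional calculus'' you invoke. The paper's route is entirely self-contained within the operator estimates already developed in Section~3, while yours imports two classical tools (Fej\'er positivity and unitary dilation) and is thereby shorter and more conceptual; in particular, your positivity holds for every $m$ without needing a threshold $L_n$, which streamlines the diagonal argument. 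Either way, Proposition~\ref{Schwarz} is the essential input that makes the convergence uniform on $K_{\delta,r}$.
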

\begin{proof}
First, we prove that for any $f\in\mathcal{RSA}(B_{\delta})$ and $0<r<1$, there exists an $N\in\mathbb{N}$ such that if $n\ge N$, then
\begin{equation*}
\Re(I+rf(x))\sum_{k=0}^n(rf(x))^k\ge0\;\;\;(x\in B_{\delta}).
\end{equation*}
In the same way as in the proof of Proposition \ref{corSchwarz}, we obtain that
\begin{align*}
&2\Re (I+rf(x))(I-rf(x))^{-1}\\
&=(I+rf(x))(I-rf(x))^{-1}+(I-rf(x)^*)^{-1}(I+rf(x)^*)\\
&=(I-rf(x)^*)^{-1}[(I-rf(x)^*)(I+rf(x))+(I+rf(x)^*)(I-rf(x))](I-rf(x))^{-1}\\
&=2(I-rf(x)^*)^{-1}(I-r^2f(x)^*f(x))(I-rf(x))^{-1}\\
&\ge2(1-r^2)(I-rf(x)^*)^{-1}(I-rf(x))^{-1}.
\end{align*}
Since $\|I-rf(x)\|\le 2$, we have
\begin{equation*}
\Re(I+rf(x))(I-rf(x))^{-1}\ge\frac{1-r^2}{4}I.
\end{equation*}
We may and do assume that the natural number $N$ (by replacing it with a larger one if necessary) also satisfies that if $n\ge N$, then $\displaystyle\sum_{k=n+1}^{\infty} r^k<\frac{1-r^2}{8}$. Using the Neumann series expansion, we have if $n\ge N$, 
\begin{align*}
&\|\Re (I+rf(x)) \sum_{k=0}^n (rf(x))^k-\Re (I+rf(x))(I-rf(x))^{-1}\|\\&\le 2\sum_{k=n+1}^{\infty} r^k<\frac{1-r^2}{4}.
\end{align*}
Therefore, for every $n\ge N$, we obtain that
\begin{align*}
&\Re (I+rf(x))\sum_{k=0}^n(rf(x))^k\\
&=\left(\Re (I+rf(x))\sum_{k=0}^n (rf(x))^k-\Re (I+rf(x))(I-rf(x))^{-1}\right)\\
&\;\;\;+\Re (I+rf(x))(I-rf(x))^{-1}\\
&\ge -\frac{1-r^2}{4}I+\frac{1-r^2}{4}I\ge 0.
\end{align*}

Next, we choose $0<r_n<1$ so that $r_n$ converges to $1$ incresingly, and choose $p_n$ as in Lemma \ref{lem}. We have seen that for any $n\in\mathbb{N}$, there exists a $K_n\in\mathbb{N}$ such that if $k\ge K_n$, then 
\begin{equation*}
\Re (I+r_np_n(x))\sum_{j=0}^k(r_np_n(x))^j\ge 0.
\end{equation*}
Set $\displaystyle q_n(x):=(I+r_np_n(x))\sum_{j=0}^{L_n}(r_np_n(x))^j$, where
 $L_n=\mathrm{max}\{K_n, L_{n-1}\}+1$. Then, $q_n\in\mathrm{RHA}(B_{\delta})$. We can also prove that $q_n$ converges to $h$ uniformly on each $K_{\delta, r}$ as an application of the techniques used in this paper. By the resolvent identity, we have
 \begin{align*}
 &\|q_n(x)-h(x)\|\\
 &\le\|(I+r_np_n(x))\sum_{j=0}^{L_n}(r_np_n(x))^j-(I+p_n(x))\sum_{j=0}^{L_n}(r_np_n(x))^j\|\\
 &\;\;\;+\|(I+p_n(x))\sum_{j=0}^{L_n}(r_np_n(x))^j-(I+p_n(x))(I-r_np_n(x))^{-1}\|\\
 &\;\;\;+\|(I+p_n(x))(I-r_np_n(x))^{-1}-(I+p_n(x))(I-p_n(x))^{-1}\|\\
 &\;\;\;+\|(I+p_n(x))(I-p_n(x))^{-1}-h(x)\|\\
 &\le \|(1-r_n)p_n(x)\|\sum_{j=0}^{L_n}\|p_n(x)\|^j+\|I+p_n(x)\|\sum_{j=L_n+1}^{\infty}\|r_np_n(x)\|^j\\
 &\;\;\;+\|I+p_n(x)\|\|(I-r_np_n(x))^{-1}(r_np_n(x)-p_n(x))(I-p_n(x))^{-1}\|\\
 &\;\;\;+\|(I+p_n(x))(I-p_n(x))^{-1}-h(x)\|.
 \end{align*}
 Hence, Proposition \ref{Schwarz} and the Neumann series expansion implies that 
 \begin{align*}
 &\|q_n(x)-h(x)\|\le |1-r_n|\sum_{j=0}^{L_n}\|\delta(x)\|^j+2\sum_{j=L_n+1}^{\infty}\|\delta(x)\|^j\\
 &\;\;\;+2\frac{1}{(1-\|\delta(x)\|)^2}|r_n-1|+\|(I+p_n(x))(I-p_n(x))^{-1}-h(x)\|.
 \end{align*}
Therefore, we conclude that $q_n$ converges to $h$ uniformly on each $K_{\delta, r}$.
 
 The converse direction is trivial.
\end{proof}

%%%%%%%%%%%%%%%%%%%%%%%%%%%Acknowledgment

\section*{Acknowledgment}

The author acknowledges his supervisor Professor Yoshimichi Ueda for his encouragements. He also acknowledges Professor John Edward McCarthy for some comments from a viewpoint of experts.

\end{document}